\newtheorem{algorithm}{Algorithm}
\begin{document}
\pagestyle{empty}

\title{Reparameterization invariant metric\\on the space of curves}

\author{Alice Le Brigant\inst{1,}\inst{2}, Marc Arnaudon\inst{1} \and Fr\'ed\'eric Barbaresco\inst{2}}

\institute{Institut Math\'ematique de Bordeaux, UMR 5251 \\
Universit\'e de Bordeaux and CNRS, France
\and
Thales Air System, Surface Radar Domain, Technical Directorate \\
Voie Pierre-Gilles de Gennes, 91470 Limours, France}

\maketitle
%
%
\begin{abstract}
%
This paper focuses on the study of open curves in a manifold $M$, and proposes a reparameterization invariant metric on the space of such paths. We use the square root velocity function (SRVF) introduced by Srivastava et al. in \cite{sri} to define a reparameterization invariant metric on the space of immersions $\mathcal{M}=\text{Imm}([0,1],M)$ by pullback of a metric on the tangent bundle $\text{T}\mathcal{M}$ derived from the Sasaki metric. We observe that such a natural choice of Riemannian metric on $\text{T}\mathcal{M}$ induces a first-order Sobolev metric on $\mathcal{M}$ with an extra term involving the origins, and leads to a distance which takes into account the distance between the origins and the distance between the SRV representations of the curves. The geodesic equations for this metric are given, as well as an idea of how to compute the exponential map for observed trajectories in applications. This provides a generalized theoretical SRV framework for curves lying in a general manifold $M$.
\end{abstract}
%
%
\section{Introduction}
%
Computing distances between shapes of open or closed curves is of interest in many fields that require shape analysis, from medical imaging to video surveillance, to radar detection. While the shape of an organ or a human contour can be modeled by a closed plane curve, some applications require the manipulation of curves lying in a non flat manifold, such as $S^2$-valued curves representing trajectories on the earth or curves in the space of hermitian positive definite matrices, where the values represent covariance matrices of Gaussian processes. The shape space of planar curves has been widely studied (\cite{mich2},\cite{mich3},\cite{younes},\cite{bauer1}), and the more general setting of shapes lying in any manifold $M$ has recently met great interest (\cite{bauer3},\cite{su},\cite{lb},\cite{zhang}). Here we consider open oriented curves in a Riemannian manifold $M$, more precisely the space of immersions $c : [0,1] \rightarrow M$,
\begin{equation*}
\mathcal{M}=\text{Imm}([0,1],M).
\end{equation*}
Reparameterizations will be represented by increasing diffeomorphisms $\phi : [0,1] \rightarrow [0,1]$ (so that they preserve the end points of the curves), and their set is denoted by $\text{Diff}^+([0,1])$. Then, one way to describe a shape is as the equivalence class of all the curves that are identical modulo reparameterization, and the shape space as the associated quotient space,
\begin{equation*}
\mathcal{S}=\text{Imm}([0,1],M)/\text{Diff}^+([0,1]).
\end{equation*}
The formal principal bundle structure $\pi : \mathcal{M} \rightarrow \mathcal{S}$ induces a decomposition of the tangent bundle $T\mathcal{M}=V\mathcal{M} \oplus H\mathcal{M}$ into a vertical subspace $V\mathcal{M}=\ker(T\pi)$ consisting of all vectors tangent to the fibers of $\mathcal{M}$ over $\mathcal{S}$, and a horizontal subspace $H\mathcal{M}=\left(V\mathcal{M}\right)^{\perp_G}$ defined as the orthogonal complement of $V\mathcal{M}$ according to the metric $G$ that we put on $\mathcal{M}$. We say formal because the manifold structure of the space $\text{Imm}([0,1],M)$ has not yet been thoroughly studied to our knowledge. We require that $G$ be reparameterization invariant, that is to say that the action of $\text{Diff}^+([0,1])$ be isometric for $G$
\begin{equation}
\label{reparinv}
G_{c\circ \phi}(h\circ \phi, k\circ \phi)=G_c(h,k),
\end{equation}
for any curve $c\in \mathcal{M}$, reparameterization $\phi \in \text{Diff}^+([0,1])$, and infinitesimal deformations $h,k \in T_c\mathcal{M}$ -- $h$ and $k$ can also be seen as vector fields along the curve $c$ in $M$. That way, the induced geodesic distance between two curves $c_0$ and $c_1$ does not change if we reparameterize them the same way, that is
\begin{equation*}
d(c_o\circ\phi,c_1\circ\phi) = d(c_0,c_1),
\end{equation*}
for any $\phi\in\text{Diff}^+([0,1])$. Also, if this property is satisfied, then $G$ induces a Riemannian metric $\hat G$ on the shape space,
\begin{equation*}
\hat G_{\pi( c)}\left( T_c\pi(h),T_c\pi(k)\right)=G_c(h^H,k^H),
\end{equation*}
in the sense that the above expression does not depend on the choice of the representatives $c$, $h$ and $k$. Here $h^H, k^H$ denote the horizontal parts of $h$ and $k$ according to the previously mentioned decomposition, as well as the horizontal lifts of $T_c\pi(h)$ and $T_c\pi(k)$, respectively. The geodesic distances $d$ on $\mathcal M$ and $\hat d$ on $\mathcal S$ are then simply linked by
\begin{equation*}
\hat d\left(\, [c_0] \, , \, [c_1] \,\right) = \inf \left\{\, d\left(c_0, c_1\circ \phi \right) \, | \, \, \phi \in \text{Diff}^+([0,1]) \, \right\},
\end{equation*}
where $[c_0]$ and $[c_1]$ denote the shapes of two given curves $c_0$ and $c_1$, and $\hat d$ verifies the stronger property
\begin{equation*}
\hat d(c_0\circ\phi,c_1\circ\psi)=\hat d(c_0,c_1), 
\end{equation*}
for any reparameterizations $\phi,\psi\in\text{Diff}^+([0,1])$. The most natural candidate for a reparameterization invariant metric $G$ on $\mathcal{M}$ is the $L^2$-metric with integration over arc length, but Michor and Mumford have shown in \cite{mich1} that the induced metric $\hat G$ on the shape space always vanishes. This has motivated the study of Sobolev metrics (\cite{mich3},\cite{bauer1},\cite{bauer2}), and particularly of a first-order Sobolev metric on the space of plane curves,
\begin{equation}
\label{sobolev}
G_c(h,k)=\int \langle D_{\ell}h^\perp,D_\ell k^\perp \rangle + \frac{1}{4} \langle D_\ell h^\parallelslant, D_\ell k^\parallelslant \rangle \, \mathrm d\ell,
\end{equation}
where we integrate according to arc length $\mathrm d\ell = \left\|c'(t)\right\| \mathrm dt \,$ and $\langle \cdot,\cdot \rangle$ denotes the euclidean metric on $\bbbr^2$, $\, D_\ell h=\frac{1}{\left\| c' \right\|}h' \,$ is the derivation of $h$ according to arc length, $D_sh^\parallelslant=\langle D_sh,v\rangle v \,$ is the projection of $D_sh$ on the unit length tangent vector field $\, v=\frac{1}{\left\| c' \right\|}c' \,$ along $c$, and $\, D_sh^\perp=\langle D_sh, n\rangle n \,$ is the projection of $D_sh$ on the unit length normal vector field $n$ along $c$. This particular first-order Sobolev metric is of interest because it can be studied via the square root velocity (SRV) framework, introduced by Srivastava et al. in \cite{sri} and used in several applications (\cite{laga},\cite{su}). This framework can be extended to curves in a general manifold by using parallel transport, in a way which allows us to move the computations to the tangent plane to the origin of one of the two curves under comparison, see \cite{lb} and \cite{zhang}. In \cite{lb} the transformation used is a generalization of the SRV function introduced by Bauer et al. in \cite{bauer1} as a tool to study a more general form of the Sobolev metric \eqref{sobolev}. In \cite{zhang} a Riemannian framework is given, including the associated Riemannian metric and the geodesic equations. While our approach in this paper is similar, we feel that the distance we introduce here will be more directly dependent on the "relief" of the manifold, since it is computed in the manifold itself rather than in one tangent plane as in \cite{lb} and \cite{zhang}. This enables us to take into account a greater amount of information on the space separating two curves.
%
%
\section{New metric on the space of parameterized curves}
%
We consider the square root velocity function (SRVF) introduced in \cite{sri} on the space of curves in $M$,
\begin{equation*}
R : \mathcal{M} \rightarrow T\mathcal{M}, \quad c \mapsto \frac{c'}{\sqrt{\left\|c'\right\|}},
\end{equation*}
where $\left\| \cdot \right\|$ is the norm associated to the Riemannian metric on $M$. This function will allow us to define a metric $G$ on $\mathcal{M}$ by pullback of a metric $\tilde G$ on $T\mathcal{M}$. First, we define the following projections from $TTM$ to $T M$. Let $\xi \in T_{(p,u)}T M$ and $(x,U)$ be a curve in $T M$ that passes through $(p,u)$ at time $0$ at speed $\xi$. Then we define the vertical and horizontal projections
\begin{eqnarray*}
\text{vp}_{(p,u)} &:& T_{(p,u)}T M \rightarrow T_pM, \quad \xi \mapsto \xi_V := \nabla_{x'(0)}U, \\
\text{hp}_{(p,u)} &:& T_{(p,u)}T M \rightarrow T_pM, \quad \xi \mapsto \xi_H := x'(0).
\end{eqnarray*}
The horizontal and vertical projections live in the tangent bundle $TM$ and are not to be confused with the horizontal and vertical parts which live in the double tangent bundle $TTM$ and will be denoted by $\xi^H$, $\xi^V$. Furthermore, let us point out that the horizontal projection is simply the differential of the natural projection $T M \rightarrow M$, and that according to these definitions, the Sasaki metric (\cite{sas1}, \cite{sas2}) can be written
\begin{equation*}
g^{S}_{(p,u)}(\xi,\eta) = \Braket{\, \xi_H\, ,\, \eta_H \,} + \Braket{\, \xi_V\, ,\, \eta_V \,},
\end{equation*}
where $\langle \cdot,\cdot \rangle$ denotes the Riemannian metric on $M$. Now we can define the metric that we put on $T\mathcal{M}$. Let us consider $\, h \in T\mathcal{M}\,$ and $\, \xi, \eta \in T_hT\mathcal{M \,}$. We define
\begin{equation}
\tilde G_h\left(\xi,\eta\right) \,= \, \Braket{\, \xi(0)_H\, ,\, \eta(0)_H } \,+\, \int_0^1 \Braket{ \, \xi(t)_V \, , \, \eta(t)_V } \, \mathrm dt,
\end{equation}
where $\, \xi(t)_H=\text{hp}(\xi(t)) \,$ and $\, \xi(t)_V=\text{vp}(\xi(t)) \,$ are the horizontal and vertical projections of  $\, \xi(t) \in TTM \,$ for all $t$. Then we have the following result.
\begin{proposition}
The pullback of the metric $\tilde G$ by the square root velocity function $R$ is given by
\begin{equation}
G_c(h,k) = \Braket{ \, h(0) , k(0) \, } + \int \Braket {\nabla_\ell h^\perp,\nabla_\ell k^\perp }  + \frac{1}{4} \Braket{ \nabla_\ell h^\parallelslant, \nabla_\ell k^\parallelslant } \, \mathrm d\ell,
\end{equation}
for any curve $c\in \mathcal{M}$ and vectors $h,k\in T_c\mathcal{M}$, where we integrate according to arc length, $\nabla_\ell h=\frac{1}{\left\| c' \right\|}\nabla_{c'}h$ is the covariant derivative of $h$ according to arc length, and $\nabla_\ell h^\parallelslant=\langle D_\ell h,v\rangle v \,$ and $\, \nabla_\ell h^\perp=\nabla_\ell h-\nabla_\ell h^\parallelslant$ are its tangential and normal components respectively. 
\end{proposition}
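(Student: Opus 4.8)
The plan is to compute the pullback $G_c(h,k) = \tilde G_{R(c)}\big(T_cR(h), T_cR(k)\big)$ directly from the definitions, so the central task is to identify the differential $T_cR(h)$ as a curve in $TTM$ and then extract its horizontal and vertical projections. First I would fix $c\in\mathcal M$ and $h\in T_c\mathcal M$, take a variation $a(s,\cdot)$ of curves in $M$ with $a(0,\cdot)=c$ and $\partial_s a(0,\cdot)=h$, and set $\xi(t) = \tfrac{\partial}{\partial s}\big|_{s=0} R(a(s,\cdot))(t) \in T_{R(c)(t)}TM$. Since $R(c)(t) = c'(t)/\sqrt{\|c'(t)\|}$, the base-point curve in $M$ underlying the path $s\mapsto R(a(s,\cdot))(t)$ is $s\mapsto \partial_t a(s,t)$ (the velocity field), whose value at $s=0$ is $c'(t)$ and whose covariant $s$-derivative at $s=0$ is $\nabla_s \partial_t a = \nabla_t \partial_s a = \nabla_{c'}h = h'$ by symmetry of the Levi-Civita connection. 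Hence the horizontal projection is $\xi(t)_H = \nabla_{c'}h(t) = h'(t)$, and in particular $\xi(0)_H = h'(0)$ — wait, more carefully: the horizontal projection of $T_cR(h)$ as an element of $T(T\mathcal M)$ evaluated at the parameter origin involves the base curve $t\mapsto R(c)(t)$ itself, so $\xi(0)_H$ is the $M$-component of the tangent vector, which after tracing the definitions gives exactly $h(0)$, the value of the deformation at the origin of the curve. This is the step I expect to require the most care: correctly bookkeeping which ``origin'' ($s=0$ in the variation, $t=0$ along the curve, or the basepoint projection $TM\to M$) each projection refers to, since $\tilde G$ pairs the $\mathrm{hp}$ of $\xi$ only at $t=0$ while pairing the $\mathrm{vp}$ of $\xi$ at all $t$.

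Next I would compute the vertical projection $\xi(t)_V = \nabla_s\big|_{s=0}\big(R(a(s,\cdot))(t)\big)$, i.e. the covariant $s$-derivative of $s\mapsto \partial_t a(s,t)/\sqrt{\|\partial_t a(s,t)\|}$. Writing $q(s,t) = \partial_t a(s,t)$ and differentiating the quotient, I get
\[
\xi(t)_V = \frac{1}{\sqrt{\|c'\|}}\,\nabla_{c'}h \;-\; \frac{1}{2}\,\frac{\langle \nabla_{c'}h, c'\rangle}{\|c'\|^{5/2}}\,c',
\]
using $\nabla_s\|q\| = \langle \nabla_s q, q\rangle/\|q\|$ and $\nabla_s q|_{s=0} = \nabla_{c'}h$. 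It is now convenient to rewrite this in terms of the arc-length derivative $\nabla_\ell h = \tfrac{1}{\|c'\|}\nabla_{c'}h$ and the unit tangent $v = c'/\|c'\|$: a short rearrangement gives $\xi(t)_V = \sqrt{\|c'\|}\big(\nabla_\ell h - \tfrac12\langle\nabla_\ell h,v\rangle v\big) = \sqrt{\|c'\|}\big(\nabla_\ell h^\perp + \tfrac12\nabla_\ell h^\parallelslant\big)$, with the tangential/normal splitting as defined in the statement.

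Finally I would substitute into $\tilde G$. The boundary term contributes $\langle \xi(0)_H, \eta(0)_H\rangle = \langle h(0), k(0)\rangle$. The integral term contributes $\int_0^1 \langle \xi(t)_V, \eta(t)_V\rangle\,\mathrm dt$; plugging in the expression above, the factor $\sqrt{\|c'\|}\cdot\sqrt{\|c'\|} = \|c'\|$ converts $\mathrm dt$ into $\mathrm d\ell$, and orthogonality of the normal and tangential parts kills the cross terms, leaving
\[
\int \Big(\langle \nabla_\ell h^\perp, \nabla_\ell k^\perp\rangle + \tfrac14\langle \nabla_\ell h^\parallelslant, \nabla_\ell k^\parallelslant\rangle\Big)\,\mathrm d\ell,
\]
which is exactly the claimed Sobolev-type term. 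Assembling the two pieces yields the stated formula for $G_c(h,k)$. The only genuine subtleties are the connection-symmetry identity $\nabla_s\partial_t a = \nabla_t\partial_s a$ (valid since $M$ is Riemannian with its Levi-Civita connection) and the careful identification of the horizontal projection at $t=0$ with $h(0)$ rather than with $\nabla_{c'}h(0)$; everything else is the routine quotient-rule computation sketched above.
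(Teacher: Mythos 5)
Your computation is essentially the paper's own proof: you obtain the vertical projection of $T_cR(h)$ by the same quotient-rule calculation, using the symmetry $\nabla_s\partial_t a=\nabla_t\partial_s a$, arrive at $\xi(t)_V=\sqrt{\|c'\|}\,\bigl(\nabla_\ell h^\perp+\tfrac12\nabla_\ell h^\parallel\bigr)$, and assemble $\tilde G$ exactly as the paper does, with the factor $\|c'\|$ converting $\mathrm dt$ into $\mathrm d\ell$ and orthogonality killing the cross terms. The one place where your write-up goes wrong is the horizontal projection. For fixed $t$, the path $s\mapsto R(a(s,\cdot))(t)$ is a curve in $TM$ whose base-point curve in $M$ is $s\mapsto a(s,t)$ --- not the velocity field $s\mapsto\partial_t a(s,t)$, which is the fiber component and lives in $TM$. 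Since $\mathrm{hp}$ is precisely the differential of the bundle projection $TM\to M$, this gives $\xi(t)_H=\partial_s a(0,t)=h(t)$ for \emph{every} $t$ (which is what the paper states), not $\nabla_{c'}h(t)$. Your ``wait, more carefully'' correction does land on the right value $h(0)$ at $t=0$, but the justification you offer (tracing through the base curve $t\mapsto R(c)(t)$) is not the right mechanism: the relevant base curve is in the $s$-direction, namely $s\mapsto a(s,0)$, whose velocity at $s=0$ is $h(0)$. Because $\tilde G$ pairs the horizontal projections only at $t=0$, this slip does not propagate into the final formula, so your conclusion stands; but the intermediate claim $\xi(t)_H=\nabla_{c'}h(t)=h'(t)$ should be deleted and replaced by $\xi(t)_H=h(t)$.
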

\begin{remark}
In the case of curves in a flat space, $G$ is the first-order Sobolev metric \eqref{sobolev}, studied in \cite{sri}, with an added term involving the origins. This extra term guaranties that the induced distance is always greater than the distance between the starting points of the curves in $M$.
\end{remark}
\begin{proof}
For any $c\in \mathcal{M}$, and $h,k \in T_c\mathcal{M}$, the metric $G$ is defined by
\begin{equation*}
G_c(h,k) = \tilde G_{R( c)}\left( T_cR(h) , T_cR(k) \right).
\end{equation*}
For any $t\in[0,1]$, we have $T_cR(h)(t)_H=h(t)$ and $T_cR(h)_V=\nabla_hR(c )(t)$. To prove this proposition, we just need to compute the latter. Let $\, s \mapsto c(s,\cdot) \,$ be a curve in $\mathcal{M}$ such that $\, c(0,\cdot)=c \,$ and $\, c_s(0,\cdot)=h \,$. Here and in all the paper we use the notations $\, c_s=\partial c/\partial s \,$ and $\, c_t=\partial c/\partial t \,$. Then
\begin{eqnarray*}
\nabla_hR(c )(t)&=& \frac{1}{\left\|c'\right\|^{1/2}} \nabla_h c' + h\left(\left\|c' \right\|^{-1/2}\right)c' \\
&=& \frac{1}{\left\| c_t\right\|^{1/2}} \nabla_{s} c_t+ \partial_s \Braket{\, c_t \, , \, c_t \,}^{-1/4} c_t \\
&=& \frac{1}{\left\| c_t\right\|^{1/2}} \nabla_t c_s - \frac{1}{2} \Braket{ \, c_t\, ,\,c_t\,}^{-5/4} \Braket{ \, \nabla_s c_t \, , \,c_t \,} \, c_t \\
&=& \left\|c'\right\|^{1/2} \left( \left( \nabla_\ell h\right)^\perp + \frac{1}{2} \langle \nabla_\ell h\, ,\, \frac{c'}{\|c'\|}\rangle \frac{c'}{\|c'\|} \right),
\end{eqnarray*}
where in the last step we use again the inversion $\nabla_s c_t=\nabla_tc_s$.
\end{proof}
%
%
\section{Fiber bundle structures}
%
\subsubsection*{Principal bundle over the shape space} We already know that we have a formal principal bundle structure over the shape space
\begin{equation*}
\pi : \mathcal{M}=\text{Imm}([0,1],M) \rightarrow \mathcal{S}=\mathcal{M}/ \text{Diff}^+([0,1]).
\end{equation*}
which induces a decomposition $\, T\mathcal{M}=V\mathcal{M} \overset{\perp}{\oplus} H\mathcal{M} \,$. Just as in the planar case, the fact that the square root velocity function $R$ verifies the equivariance property 
\begin{equation*}
R(c\circ \phi) = \sqrt{\phi'} \left(R(c ) \circ \phi \right) 
\end{equation*}
for all $c \in \mathcal{M}$, $h,k \in T_c\mathcal{M}$ and $\phi \in \text{Diff}^+([0,1])$, guaranties that the integral part of $G$ is reparameterization invariant. Remembering that the reparameterizations $\phi\in \text{Diff}^+([0,1])$ preserve the origins of the curves, we notice that $G$ is constant along the fibers, as expressed in equation \eqref{reparinv}, and so there exists a Riemannian metric $\hat G$ on the shape space $\mathcal{S}$ such that $\pi$ is (formally) a Riemannian submersion from $(\mathcal{M},G)$ to $(\mathcal{S},\hat G)$
\begin{equation*}
G_c(h^H,k^H) = \hat G_{\pi( c)}\left( T_c\pi(h), T_c\pi(k) \right),
\end{equation*}
where $h^H$ and $k^H$ are the horizontal parts of $h$ and $k$ respectively. 
\subsubsection*{Fiber bundle over the starting points}
The special role that plays the starting point in the metric $G$ induces another formal fiber bundle structure, where the base space is the manifold $M$, seen as the set of starting points of the curves, and the fibers are the set of curves with the same origin. The projection is then
\begin{equation*}
\pi^{(*)} : \mathcal{M} \rightarrow M, \quad c \mapsto c(0).
\end{equation*}
It induces another decomposition of the tangent bundle in vertical and horizontal bundles
\begin{eqnarray*}
V^{(*)}_c\mathcal{M} &=& \ker T\pi^{(*)} = \left\{ \, h \in T_c\mathcal{M} \, | \, h(0)=0 \, \right\}, \\
H^{(*)}_c\mathcal{M} &=& \left( V^{(*)}_c\mathcal{M}\right)^{\perp_G}.
\end{eqnarray*}
\begin{proposition}
We have the usual decomposition $T\mathcal{M}=V^{(*)}\mathcal{M} \,\, \overset{\perp}{\oplus} \,\, H^{(*)}\mathcal{M}$, the horizontal bundle $H^{(*)}_c\mathcal{M}$ consists of parallel vector fields along $c$, and $\pi^{(*)}$ is (formally) a Riemannian submersion for $(\mathcal{M},G)$ and $(M, \langle \cdot, \cdot \rangle)$.  
\end{proposition}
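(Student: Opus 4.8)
The plan is to establish the three assertions in turn, the key computational input being the explicit formula for $G$ from the previous proposition. First I would prove the orthogonal decomposition: since $G_c(h,k) = \braket{h(0),k(0)} + \int(\ldots)\,\mathrm d\ell$ and the integral term depends only on the covariant derivative $\nabla_\ell h$, a vector field $h\in V^{(*)}_c\mathcal M$ (i.e.\ $h(0)=0$) is $G$-orthogonal to some $k$ precisely when $\int \braket{\nabla_\ell h^\perp,\nabla_\ell k^\perp} + \tfrac14\braket{\nabla_\ell h^\parallelslant,\nabla_\ell k^\parallelslant}\,\mathrm d\ell = 0$. To produce the decomposition $h = h^{V^{(*)}} + h^{H^{(*)}}$ of an arbitrary $h\in T_c\mathcal M$, I would write $h = (h - P) + P$, where $P$ is the vector field along $c$ obtained by parallel-transporting $h(0)$ along $c$; then $h-P$ vanishes at $0$, hence lies in $V^{(*)}_c\mathcal M$, and it remains to check $P\in H^{(*)}_c\mathcal M$.

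Second, for the characterization of $H^{(*)}_c\mathcal M$, the point is that a parallel vector field $P$ along $c$ satisfies $\nabla_{c'}P = 0$, hence $\nabla_\ell P = 0$ and both projections $\nabla_\ell P^\perp$ and $\nabla_\ell P^\parallelslant$ vanish identically; therefore $G_c(P,k)$ reduces to $\braket{P(0),k(0)}$, which is zero for every $k\in V^{(*)}_c\mathcal M$. This shows parallel fields are horizontal. Conversely, if $h\in H^{(*)}_c\mathcal M$, then subtracting its parallel-transport part $P$ gives an element $h-P\in V^{(*)}_c\mathcal M$ which is also $G$-orthogonal to $V^{(*)}_c\mathcal M$ (being a difference of two such), hence has zero $G$-norm; since $(h-P)(0)=0$, the norm is $\int \|\nabla_\ell(h-P)^\perp\|^2 + \tfrac14\|\nabla_\ell(h-P)^\parallelslant\|^2\,\mathrm d\ell$, and vanishing forces $\nabla_\ell(h-P)=0$, i.e.\ $h-P$ is parallel; being parallel and zero at $0$, it is identically zero, so $h=P$ is parallel. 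This simultaneously proves the decomposition is direct and that $H^{(*)}_c\mathcal M$ is exactly the space of parallel fields, which is naturally identified with $T_{c(0)}M$ via $P\mapsto P(0)$.

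Third, for the Riemannian submersion property, I would note that $T_c\pi^{(*)}(h) = h(0)\in T_{c(0)}M$, so $\pi^{(*)}$ restricted to the horizontal space is the isomorphism $P\mapsto P(0)$ above; then for horizontal $P,Q$ (parallel fields) we compute $G_c(P,Q) = \braket{P(0),Q(0)}$ since the integral term drops out, which is exactly $\braket{\,T_c\pi^{(*)}(P),T_c\pi^{(*)}(Q)\,}$ in $(M,\braket{\cdot,\cdot})$. Hence $T_c\pi^{(*)}$ is a linear isometry from $H^{(*)}_c\mathcal M$ onto $T_{c(0)}M$, which is the definition of a Riemannian submersion.

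The main obstacle is not conceptual but a matter of rigor in the infinite-dimensional setting: one must be slightly careful that the "orthogonal complement" $H^{(*)}_c\mathcal M = (V^{(*)}_c\mathcal M)^{\perp_G}$ is genuinely a complement and not merely contained in the kernel of the restricted metric, i.e.\ that every $h$ really does split. The argument above handles this by exhibiting the splitting explicitly via parallel transport rather than appealing to an abstract orthogonal-projection theorem, so the only thing to keep an eye on is that parallel transport along $c$ is well-defined and smooth, which holds since $c$ is an immersion on the compact interval $[0,1]$. As elsewhere in the paper, all of this is "formal" in that the manifold structure of $\mathrm{Imm}([0,1],M)$ is not being rigorously invoked.
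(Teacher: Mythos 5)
Your proposal is correct and follows essentially the same route as the paper: decompose $h$ by subtracting the parallel field $P$ with $P(0)=h(0)$, check that parallel fields are $G$-orthogonal to $V^{(*)}_c\mathcal M$, and verify the submersion property via $G_c(P,Q)=\braket{P(0),Q(0)}$. You additionally spell out the converse (horizontal $\Rightarrow$ parallel, via the vanishing of the nonnegative integral term), a detail the paper leaves implicit, which is a welcome completion rather than a different approach.
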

\begin{proof}
Let $h$ be a tangent vector. Consider $h_0$ the parallel vector field along $c$ with initial value $h_0(0)=h(0)$. It is a horizontal vector, since its vanishing covariant derivative along $c$ assures that for any vertical vector $l$ we have $G_c(h_0,l)=0$. The difference $\tilde h=h-h_0$ between those two horizontal vectors has initial value $0$ and so it is a vertical vector, which gives a decomposition of $h$ into a horizontal vector and a vertical vector. The definition of $H^{(*)}\mathcal M$ as the orthogonal complement of $V^{(*)}\mathcal M$ guaranties that their sum is direct. Now if $k$ is another tangent vector, then the scalar product between their horizontal parts is
\begin{equation*}
G_c(h^H,k^H)=\Braket{ \, h^H(0)\, ,\,k^H(0) \, }_{c(0)} =\Braket{ \, h(0)\, ,\,k(0) \, }_{c(0)} = \Braket{\, T_c\pi^{(*)}(h^H) \, ,\, T_c\pi^{(*)}(k^H) \,}_{\pi^{(*)}},
\end{equation*}
and this completes the proof.
\end{proof}
%
%
%
\section{Induced distance on the space of curves}
%
Here we will give an expression for the geodesic distance induced by the metric $G$. Let us consider two curves $c_0, c_1 \in \mathcal{M}$, and a path of curves $s \mapsto c(s,\cdot)$ linking them in $\mathcal{M}$ \begin{equation*}
c(0,t)=c_0(t), \quad c(1,t)=c_1(t),
\end{equation*} 
for all $t \in [0,1]$. We denote by $q(s,\cdot)=R\left(c(s,\cdot)\right)$ the image of this path of curves by the SRVF $R$. Note that $q$ is a vector field along the surface $c$ in $M$. Let now $\tilde q$ be the raising of $q$ in the tangent plane $T_{c(0,0)}M$ in the following way
\begin{equation*}
\tilde q(s,t) = P_{c(\cdot,0)}^{s,0}\circ P_{c(s,\cdot)}^{t,0} \left( q(s,t) \right),
\end{equation*}
where we denote by $P_\gamma^{t_1,t_2}:T_{\gamma(t_1)}M \rightarrow T_{\gamma(t_2)}M$ the parallel transport along a curve $\gamma$ from $\gamma(t_1)$ to $\gamma(t_2)$. Notice that $\tilde q$ is a surface in a vector space, as illustrated in Figure \ref{fig}. Lastly, we introduce a vector field $(a,\tau)\mapsto \omega^{s,t}(a,\tau)$ in $M$, which parallel translates $q(s,t)$ along $c(s,\cdot)$ to its origin, then along $c(\cdot,0)$ and back down again, as shown in Figure \ref{fig}. More precisely
\begin{equation*}
\omega^{s,t}(a,\tau)=P_{c(a,\cdot)}^{0,\tau} \circ P_{c(\cdot,0)}^{s,a} \circ P_{c(s,\cdot)}^{t,0} \left( q(s,t) \right)
\end{equation*}
for all $b,s$. That way the quantity $\nabla_s\omega^{s,t}$ measures the holonomy along the rectangle of infinitesimal width shown in Figure \ref{fig}. 
We can now formulate our result.
%
\begin{proposition}
\label{distance1}
With the above notations, the geodesic distance induced by the Riemannian metric $G$ between two curves $c_0$ and $c_1$ on the space $\mathcal{M}=\text{Imm}([0,1],M)$ of parameterized curves is given by
\begin{equation}
\label{eqdist1a}
\text{dist}(c_0,c_1)= \inf_{c(0,\cdot)=c_0, c(1,\cdot)=c_1} \,\,\, \int_0^1 \sqrt{\left\| c_s(s,0) \right\|^2 + \int_0^1 \left\| \nabla_sq(s,t) \right\|^2 \, \mathrm dt} \,\,\, \mathrm ds,
\end{equation}
where $q=R(c)$ is the Square Root Velocity representation of the curve $c$ and the norm is the one associated to the Riemannian metric on $M$. It can also be written
\begin{equation}
\label{eqdist1b}
\text{dist}(c_0,c_1)= \inf_{c(0,\cdot)=c_0, c(1,\cdot)=c_1} \,\,\, \int_0^1 \sqrt{\left\| c_s(s,0) \right\|^2 + \int_0^1 \left\| \, \tilde q_s(s,t) +  \Omega(s,t) \right\|^2 \, \mathrm dt} \,\,\, \mathrm ds,
\end{equation}
where $\tilde q$ is the raising of $q$ in the tangent plane $T_{c(0,0)}M$ and the curvature term $\Omega$ is given by
\begin{eqnarray*}
\Omega(s,t)&=&P_{c(\cdot,0)}^{s,0} \circ P_{c(s, \cdot)}^{t,0} \left(\nabla_s\omega^{s,t}(s,t) \right)\\
&=& P_{c(\cdot,0)}^{s,0} \circ P_{c(s, \cdot)}^{t,0} \left( \int_0^t P_{c(s,\cdot)}^{\tau,t}\left( \mathcal R( c_\tau, c_s) P_{c(s,\cdot)}^{t,\tau}q(s,t) \right) \, \mathrm d\tau\right),
\end{eqnarray*}
if $\mathcal R$ denotes the curvature tensor of the manifold $M$.
\end{proposition}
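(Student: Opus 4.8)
The plan is to unwind the definition of the geodesic distance, namely $\text{dist}(c_0,c_1)=\inf\int_0^1\sqrt{G_{c(s,\cdot)}\big(c_s(s,\cdot),c_s(s,\cdot)\big)}\,\mathrm ds$ over all paths $s\mapsto c(s,\cdot)$ in $\mathcal M$ from $c_0$ to $c_1$, and to evaluate the integrand. Since $G$ is the pullback of $\tilde G$ by $R$, we have $G_{c(s,\cdot)}(c_s,c_s)=\tilde G_{R(c(s,\cdot))}\big(T_cR(c_s),T_cR(c_s)\big)$; the identities $T_cR(h)(t)_H=h(t)$ and $T_cR(h)(t)_V=\nabla_hR(c)(t)$ established in the proof of the first proposition, applied with $h=c_s$ and $q=R(c)$ (so that $\nabla_{c_s}R(c(s,\cdot))(t)=\nabla_sq(s,t)$), together with the definition of $\tilde G$, give $G_{c(s,\cdot)}(c_s,c_s)=\|c_s(s,0)\|^2+\int_0^1\|\nabla_sq(s,t)\|^2\,\mathrm dt$. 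Substituting this into the length functional yields \eqref{eqdist1a}.

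For \eqref{eqdist1b} I would rewrite $\nabla_sq(s,t)$ in terms of the lifted surface $\tilde q$ and a holonomy term. Set $\bar q(s,t):=P_{c(s,\cdot)}^{t,0}\big(q(s,t)\big)\in T_{c(s,0)}M$, so that $\tilde q(s,t)=P_{c(\cdot,0)}^{s,0}\big(\bar q(s,t)\big)$ and $q(s,t)=P_{c(s,\cdot)}^{0,t}\big(\bar q(s,t)\big)$. Because $\tilde q(s,t)$ lies in the fixed vector space $T_{c(0,0)}M$ and differentiating a parallel-transported family along its base curve amounts to parallel-transporting the ordinary derivative, differentiating $\bar q(s,t)=P_{c(\cdot,0)}^{0,s}\big(\tilde q(s,t)\big)$ along $s$ gives $\nabla_s\bar q(s,t)=P_{c(\cdot,0)}^{0,s}\big(\tilde q_s(s,t)\big)$. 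To pass from $\nabla_s\bar q$ to $\nabla_sq$, fix $t$ and consider the vector field $X(s,\tau):=P_{c(s,\cdot)}^{0,\tau}\big(\bar q(s,t)\big)$, parallel along each $c(s,\cdot)$; then $\nabla_\tau X=0$, the curvature identity $\nabla_s\nabla_\tau X-\nabla_\tau\nabla_sX=\mathcal R(c_s,c_\tau)X$ collapses to the linear ODE $\nabla_\tau(\nabla_sX)=\mathcal R(c_\tau,c_s)X$ along $c(s,\cdot)$ with initial value $\nabla_sX(s,0)=\nabla_s\bar q(s,t)$, and variation of constants, together with $X(s,\tau)=P_{c(s,\cdot)}^{t,\tau}q(s,t)$, gives at $\tau=t$
\begin{equation*}
\nabla_sq(s,t)=P_{c(s,\cdot)}^{0,t}\big(\nabla_s\bar q(s,t)\big)+\int_0^tP_{c(s,\cdot)}^{\tau,t}\Big(\mathcal R(c_\tau,c_s)\,P_{c(s,\cdot)}^{t,\tau}q(s,t)\Big)\,\mathrm d\tau.
\end{equation*}

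Running the same variation-of-constants computation on the vector field $\omega^{s,t}$ introduced above shows that $\nabla_s\omega^{s,t}(s,t)$ equals exactly the integral in the last display — the parallel transport of $q(s,t)$ back along $c(\cdot,0)$ contributes nothing, since it is parallel in $s$ — so that integral is $\nabla_s\omega^{s,t}(s,t)$, the two displayed forms of $\Omega$ coincide, and $\nabla_s\omega^{s,t}(s,t)=P_{c(s,\cdot)}^{0,t}\circ P_{c(\cdot,0)}^{0,s}\big(\Omega(s,t)\big)$. Combining the three relations gives $\nabla_sq(s,t)=P_{c(s,\cdot)}^{0,t}\circ P_{c(\cdot,0)}^{0,s}\big(\tilde q_s(s,t)+\Omega(s,t)\big)$, and since parallel transport is a linear isometry between tangent spaces, $\|\nabla_sq(s,t)\|=\|\tilde q_s(s,t)+\Omega(s,t)\|$, which turns \eqref{eqdist1a} into \eqref{eqdist1b}. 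The main obstacle is the bookkeeping in the middle step: one must disentangle the two roles of $t$ in $\bar q(s,t)$ — freezing the endpoint slot before invoking the curvature ODE — and keep the direction of every parallel transport straight so that the correction emerges precisely as the stated $\Omega$; everything else reduces to the standard facts that $\nabla_sc_t=\nabla_tc_s$ and that differentiating a parallel-transported family amounts to parallel-transporting its ordinary derivative.
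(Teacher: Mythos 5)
Your argument is correct and follows essentially the same route as the paper: the first formula comes from the pullback/length-preservation property of $R$ together with the horizontal and vertical projections of $q_s$, and the second comes from introducing the field parallel along each $c(s,\cdot)$ with value $q(s,t)$ at $\tau=t$ (your $X$ is the paper's $\nu$), writing $\nabla_sX(s,t)$ as the parallel transport of its value at $\tau=0$ plus the integrated curvature term, identifying that integral with $\nabla_s\omega^{s,t}(s,t)$, and using that parallel transport is an isometry. The only cosmetic difference is that you phrase the key step as a variation-of-constants ODE argument where the paper invokes the same identity directly.
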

%
\begin{remark}
Our original motivation for this work was to find a geodesic distance (that is, a distance induced by a Riemannian metric) that resembled the product distance introduced in \cite{lb}. In the first term under the square root of expression \eqref{eqdist1b} we can see the velocity vector of the curve $c(\cdot,0)$ linking the two origins, and in the second the velocity vector of the curve $\tilde q$ linking the TSRVF-images of the curves -- Transported Square Root Velocity Function, as introduced by Su et al. in \cite{su}.  However there is also a curvature term $\Omega$ which, as previously mentionned, measures the holonomy along the rectangle of infinitesimal width shown in Figure \ref{fig}. If instead we equip the tangent bundle $\text{T}\mathcal{M}$ with the metric
\begin{equation*}
\tilde G_h(\xi,\xi) = \left\| \xi_h(0) \right\|^2 + \int_0^1 \left\| \, \xi_v(t) - \int_0^t P_c^{\tau,t}\left( \mathcal R(c', \xi_h) P_c^{t,\tau}h(t) \right)\, \mathrm d\tau \, \right\|^2 \mathrm dt,
\end{equation*}
for $h \in T\mathcal{M}$ and $\, \xi, \eta \in T_hT\mathcal{M}$, then the curvature term $\Omega$ vanishes and the geodesic distance on $\mathcal{M}$ becomes
\begin{equation}
\label{eqdist2}
\text{dist}(c_0,c_1)=\inf_{c(0,\cdot)=c_0,c(1,\cdot)=c_1} \,\,\,\, \int_0^1 \sqrt{\left\| c_s(s,0) \right\|^2 + \left\| \tilde q_s(s,\cdot) \right\|^2 } \,\,\, \mathrm ds,
\end{equation}
where the norm of the second term under the square root is the $L^2$-norm, and which corresponds exactly to the geodesic distance associated to the metric on the space $\mathbb{C}=\cup_{p\in M} L^2([0,1],T_pM)$ introduced by Zhang et al. in \cite{zhang}. The difference between the two distances \eqref{eqdist1a} and \eqref{eqdist2} resides in the curvature term $\Omega$, which translates the fact that in the first one, we compute the distance in the manifold, whereas in the second, it is computed in the tangent space to one of the origins of the curves. Therefore, the first one takes more directly into account the "relief" of the manifold between the two curves under comparison. For example, if there is a "bump" between two curves in an otherwise relatively flat space, the second distance \eqref{eqdist2} might not see it, whereas the first one \eqref{eqdist1a} will thanks to the curvature term.
\end{remark}
%
\begin{remark}
Let us briefly consider the flat case : if the manifold $M$ is flat, the two distances \eqref{eqdist1a} and \eqref{eqdist2} coincide. 
If two curves $c_0$ and $c_1$ in a flat space have the same starting point $p$, the first summand under the square root vanishes and the distance becomes the $L^2$-distance between the two SRV representations $q_0=R(c_0)$ and $q_1=R(c_1)$. If two curves in a flat space differ only by a translation, then the distance is simply the distance between their origins.
\end{remark}
\begin{figure}[h]
\centering
\includegraphics[width=8.2cm]{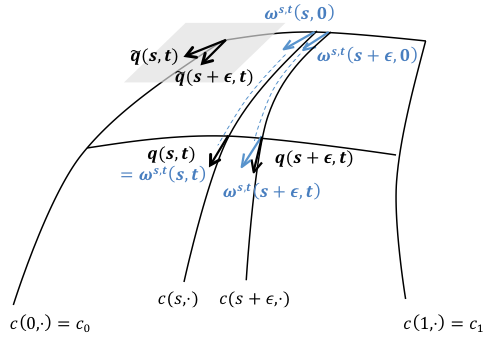}
\caption{Illustration of the distance between two curves $c_0$ and $c_1$ in the space of curves $\mathcal{M}$}
\label{fig}
\end{figure} 
\begin{proof}
Since $G$ is defined by pullback of $\tilde G$ by the SRVF $R$, we know that the lengths of $c$ in $\mathcal{M}$ and of $q=R( c)$ in $\text{T}\mathcal{M}$ are equal and so that
\begin{equation*}
\text{dist}(c_0,c_1)=\inf_{c(0,\cdot)=c_0,c(1,\cdot)=c_1} \,\,\, \int_0^1 \sqrt{\tilde G \left( q_s(s,\cdot),q_s(s,\cdot)\right)} \,\,\, \mathrm ds,
\end{equation*}
with 
\begin{equation*}
\tilde G \left( q_s(s,\cdot),q_s(s,\cdot)\right) = \left\| c_s(s,0) \right\|^2 + \int_0^1 \left\| \nabla_sq(s,t) \right\|^2 \, \mathrm dt.
\end{equation*}
Now let us fix $t \in [0,1]$. Then $s \mapsto P_{c(s,\cdot)}^{t,0}\left(q(s,t)\right)$ is a vector field along $c(\cdot, 0)$, and so
\begin{equation*}
\nabla_s\left( P_{c(s,\cdot)}^{t,0}q(s,t) \right)=P_{c(\cdot, 0)}^{0,s} \left( \frac{\partial}{\partial s} P_{c(\cdot, 0)}^{s,0} \circ P_{c(s,\cdot)}^{t,0} \left(q(s,t)\right) \right) = P_{c(\cdot, 0)}^{0,s} \left( \tilde q_s (s,t) \right).
\end{equation*}
We consider the vector field $\nu$ along the surface $(s,\tau)\mapsto c(s,\tau)$ that is parallel along all curves $c(s,\cdot)$ and takes value $\nu(s,t)=q(s,t) \,$ in $\tau=t \,$ for any $s\in[0,1]$, that is
\begin{equation*}
\nu(s,\tau)=P_{c(s,\cdot)}^{t,\tau}\left(q(s,t)\right),
\end{equation*}
for all $s\in [0,1]$ and $\tau\in [0,1]$. That way we know that 
\begin{eqnarray*}
\nabla_s\nu(s,t)&=&\nabla_sq(s,t),  \\
\nabla_s\nu(s,0)&=& P_{c(\cdot, 0)}^{0,s} \left( \tilde q_s(s,t) \right), \\ 
\nabla_\tau\nu(s,\tau)&=&0,  
\end{eqnarray*}
for all $s,\tau \in[0,1]$. Then we can express its covariant derivative in the following way
\begin{eqnarray}
\label{nablav}
\nabla_s\nu(s,t)&=&P_{c(s,\cdot)}^{0,t} \left( \nabla_s\nu(s,0) \right) + \int_0^t P_{c(s,\cdot)}^{\tau,t}\left( \nabla_\tau\nabla_s \nu(s,\tau) \right) d\tau \notag \\
&=& P_{c(s,\cdot)}^{0,t} \circ P_{c(\cdot, 0)}^{0,s} \left( \tilde q_s(s,t) \right) + \int_0^t P_{c(s,\cdot)}^{\tau,t}\left( \mathcal R(c_\tau,c_s) P_{c(s,\cdot)}^{t,\tau}q(s,t) \right) \, \mathrm d\tau.
\end{eqnarray}
Now let us fix $s\in [0,1]$ as well. Notice that the vector field $\omega^{s,t}$ defined above verifies
\begin{eqnarray*}
\omega^{s,t}(s,t)&=&q(s,t), \\ 
\nabla_\tau\omega^{s,t}(a,\tau)&=& 0, \\ 
\nabla_a\omega^{s,t}(a,0)&=& 0,
\end{eqnarray*}
for all $a,\tau \in [0,1]$. Note that unlike $\nu$, we do \textit{not} have $\nabla_s\omega^{s,t}(s,t)=\nabla_sq(s,t)$ because $\omega^{s,t}(a,t)=q(a,t)$ is only true for $a=s$. It is easy to verify that the last term of equation \eqref{nablav} is precisely the covariant derivative of the vector field $\omega^{s,t}$
\begin{equation*}
\nabla_s\omega^{s,t}(s,t) = \int_0^t P_{c(s,\cdot)}^{\tau,t}\left( \mathcal R(c_\tau, c_s) P_{c(s,\cdot)}^{t,\tau}q(s,t) \right) \, \mathrm d\tau,
\end{equation*}
since for any $\tau\in[0,1]$, $\omega^{s,t}(s,\tau)=P_{c(s,\cdot)}^{t,\tau}q(s,t)$, and finally by composing by $P_{c(\cdot,0)}^{s,0} \circ P_{c(s, \cdot)}^{t,0}$, we obtain the second expression \eqref{eqdist1b}, which completes the proof.
\end{proof}
%
%
%
\section{Geodesic equation on $\mathcal M$}
%
In order to be able to compute the distance given by \eqref{eqdist1a} between two curves, we first need to compute the optimal deformation $s\mapsto c(s,\cdot)$ from one to the other. In other words, we need to characterize the geodesics of $\mathcal M$ for our metric. In order to do so, we use the variational principle. The beginning of the calculations are very similar to those in \cite{zhang}. Let us consider two curves $c_0,c_1\in\mathcal M$ and a path $[0,1]\ni s \mapsto c(s,\cdot)\in\mathcal M$ going from one to the other. This path $c$ is a geodesic if it minimizes the energy functional $E : \mathcal M \rightarrow \mathbb R_+$
\begin{equation*}
E(c) = \int_0^1 G\left(\frac{\partial c}{\partial s},\frac{\partial c}{\partial s}\right) \mathrm ds.
\end{equation*}
Let $a\mapsto c(a,\cdot,\cdot)$, $a\in(-\epsilon, \epsilon)$, be a proper variation of the path $s\mapsto c(s,\cdot)$, meaning that it coincides with $c$ in $a=0$, and it preserves its end points
\begin{eqnarray*}
c(0,\cdot,\cdot)&=&c, \\
c_a(a,0,\cdot)&=&0 \quad \forall a\in(-\epsilon,\epsilon),\\
c_a(a,1,\cdot)&=&0 \quad \forall a\in(-\epsilon,\epsilon).
\end{eqnarray*}
Then $c$ is a geodesic of $\mathcal M$ if $\left. \frac{d}{da}\right|_{a=0}E(c(a,\cdot,\cdot))=0$ for any proper variation $a\mapsto c(a,\cdot,\cdot)$. If we denote by $E(a) = E(c(a,\cdot,\cdot))$, for $a\in(-\epsilon,\epsilon)$, the energy of a proper variation of $c$, then we have
\begin{equation*}
E(a)=\int \Braket{\,   c_s(a,s,0),   c_s(a,s,0) \,} \mathrm ds \, + \, \int\int \Braket{\, \nabla_s  q(s,t),\nabla_s   q(s,t) \,} \mathrm dt \, \mathrm ds,
\end{equation*}
where $q=c_t/\sqrt{\|c_t\|}$ is the SRV representation of $c$. Its derivative is given by
\begin{equation*}
\frac{1}{2} E'(a) = \int \, \Braket{ \, \nabla_a  c_s(a,s,0),   c_s(a,s,0) \,} \, \mathrm ds 
+ \int\int \, \Braket{\, \nabla_a \nabla_s   q(a,s,t),\nabla_s  q(a,s,t) \,} \, \mathrm dt \, \mathrm ds. 
\end{equation*}
Considering that the variation preserves the end points, integration by parts gives
\begin{eqnarray*}
\int \Braket{\, \nabla_a  c_s,  c_s \,} \mathrm ds &=& - \int \Braket{\nabla_s   c_s,   c_a} \mathrm ds \\
\int \Braket{\, \nabla_s\nabla_a  q, \nabla_s q \,} \mathrm ds &=& - \int \Braket{\nabla_s \nabla_s q, \nabla_a q} \mathrm ds,
\end{eqnarray*}
and so we obtain
\begin{eqnarray*}
\dfrac{1}{2}E'(a)&=& - \, \int \left. \Braket{ \, \nabla_s   c_s,   c_a \,} \right|_{t=0} \mathrm ds
       \, + \, \int \int \Braket{ \, \mathcal R (  c_a,   c_s)   q \, +\, \nabla_s\nabla_a  q , \nabla_s  q\,} \mathrm dt \, \mathrm ds \\
       &=& - \, \int \left. \Braket{ \, \nabla_s   c_s ,   c_a \,} \right|_{t=0} \mathrm ds
       \,-\, \int \int \Braket{\, \mathcal R(   q, \nabla_s   q)  c_s ,   c_a \,} \, + \, \Braket{\, \nabla_s \nabla_s q , \nabla_a q\,} \mathrm dt \, \mathrm ds.
\end{eqnarray*}
This quantity has to vanish in $a=0$ for all proper variations 
\begin{equation*}
\int \Braket{ \, \left. \nabla_s c_s \right|_{t=0}, \left.   c_a \right|_{a=0, t=0} \, } \mathrm ds \nonumber 
+ \int \int \Braket{\, \mathcal R(v,\nabla_sq)c_s,\left.   c_a \right|_{a=0} \,} \, + \, \Braket{\, \nabla_s\nabla_s q, \left. \nabla_a  q \right|_{a=0} \,} \, \mathrm dt \, \mathrm ds \, = \, 0. \nonumber 
\end{equation*} 
Unfortunately, we cannot yield any conclusions at this point, because $c_a(0,s,t)$ and $\nabla_aq(0,s,t)$ cannot be chosen independently, since $q$ is not any vector field along $c$ but its image via the Square Root Velocity Function. Using the notations
\begin{eqnarray*}
B(s,t) &=& \mathcal R(q,\nabla_s q)c_s (s,t), \\
D(s,t) &=& \frac{1}{\sqrt{|c_t|}} \nabla_s\nabla_s q(s,t) \, - \, \frac{1}{2} \frac{\Braket{\, \nabla_s\nabla_s q \, , \, c_t \,}}{|c_t|^{5/2}}c_t(s,t),
\end{eqnarray*}
and the lighter notation $u(t_1)^{t_1,t_2}=P_c^{t_1,t_2}(u(t_1))$ to denote the parallel transport along a curve $c$ of a vector field $u$, the following simple manipulations provide us with a solution
\begin{eqnarray*}
&& \int_0^1 \Braket{ \nabla_s c_s(s,0), c_a(0,s,0) } \mathrm ds + \int_0^1 \int_0^1 \Braket{ B(s,t), c_a(0,s,t)} + \Braket{D(s,t),\nabla_t  c_a(0,s,t)}\mathrm dt \, \mathrm ds \\
&=& \int_0^1 \Braket{ \nabla_s c_s(s,0), c_a(0,s,0) } \mathrm ds + \int_0^1 \int_0^1 \Braket{ B(s,t), c_a(0,s,0)^{0,t}+\int_0^t\nabla_tc_a(0,s,\tau)^{\tau,t}\mathrm d\tau} \mathrm dt \, \mathrm ds \\
&+& \int_0^1\int_0^1 \Braket{D(s,t),\nabla_t c_a(0,s,t)} \, \mathrm dt \, \mathrm ds \\
&=&  \int_0^1 \Braket{ \nabla_s c_s(s,0) + \int_0^1 B(s,t)^{t,0}\mathrm dt, c_a(0,s,0) } \mathrm ds + \int_0^1\int_0^1\int_0^t \Braket{B(s,t)^{t,\tau},\nabla_tc_a(0,s,\tau)} \mathrm d\tau \, \mathrm dt \, \mathrm ds \\
&+& \int_0^1\int_0^1 \Braket{D(s,t),\nabla_t c_a(0,s,t)} \, \mathrm dt \, \mathrm ds \\
&=& \int_0^1 \Braket{ \nabla_s c_s(s,0) + \int_0^1 B(s,t)^{t,0}\mathrm dt \, , \, c_a(0,s,0) } \mathrm ds \\
&+& \int_0^1\int_0^1 \Braket{D(s,t) + \int_t^1 B(s,\tau)^{\tau,t} \mathrm d\tau \, , \, \nabla_t c_a(0,s,t)} \, \mathrm dt \, \mathrm ds.
\end{eqnarray*}
Since the variations $c_a(0,s,0)$ and $\nabla_tc_a(0,s,\cdot)$ can be chosen independently for all $s$, we obtain the following characterization of the geodesics $s\mapsto c(s,\cdot)$ of $\mathcal M$
\begin{eqnarray*}
\nabla_sc_s(s,0) + \int_0^1 \mathcal R(q,\nabla_sq)c_s(s,t)^{t,0} \mathrm dt &=& 0, \quad \forall s\\
D(s,t) + \int_t^1 \mathcal R(q,\nabla_sq)c_s(s,\tau)^{\tau,t} \mathrm d\tau &=& 0, \quad \forall t,s.
\end{eqnarray*}
Finally, using the definition of $D(s,t)$, the geodesic equations are
\begin{subequations}
\begin{align}
\nabla_sc_s(s,0) \,\,\, =& \,\,\, r(s,0) , \quad \forall s \label{geodeq1} \\
\nabla_s\nabla_s q(s,t) \,\,\, =& \,\,\, \left\|q(s,t) \right\| \left( r(s,t) + r(s,t)^\parallelslant \right), \quad \forall t,s \label{geodeq2}
\end{align}
\end{subequations}
where $q$ is the SRV representation of $c$, the vector field $r$ is given by
\begin{eqnarray*}
r(s,t) &=& - \int_t^1 \mathcal R(q,\nabla_sq)c_s(s,\tau)^{\tau,t} \mathrm d\tau,
\end{eqnarray*} 
and $r^{\parallelslant} = \Braket{r,v} v \,\,\,$ with $v=\frac{1}{\|c_t\|}c_t \,\,\,$, is the tangential component of $r$.
%
%
%
\section{Exponential map}
%
In this section, we describe an algorithm which allows us to compute the geodesic $s\mapsto c(s,\cdot)$ starting from a point $c\in \mathcal M$ at speed $u\in T_c\mathcal M$. This amounts to finding the optimal deformation of the curve $c$ in the direction of the vector field $u$ according to our metric. We initialize this path $s\mapsto c(s,\cdot)$ by setting $c(0,\cdot)=c$ and $c_s(s,0)=u$, and we propagate it using iterations of fixed step $\epsilon >0$. The aim is, given $c(s,\cdot)$ and $c_s(s,\cdot)$, to deduce $c(s+\epsilon,\cdot)$ and $c_s(s+\epsilon,\cdot)$. The first is obtained by following the exponential map on the manifold $M$ 
\begin{equation*}
c(s+\epsilon,\cdot)=\exp_{c(s,\cdot)} \epsilon c_s(s,\cdot),
\end{equation*}
and the second requires the computation of the variation $\nabla_s c_s(s,\cdot)$
\begin{equation*}
c_s(s+\epsilon,\cdot)=\left[ c_s(s,\cdot) + \epsilon \nabla_s c_s(s,\cdot) \right]^{s,s+\epsilon},
\end{equation*}
where once again, we use the notation $w(s)^{s,s+\epsilon}=P_c^{s,s+\epsilon} \left(w(s)\right)$ for the parallel transport of a vector field $s\mapsto w(s)$ along a curve $s\mapsto c(s)$ in $M$. If we suppose that at time $s$ we have $c(s,\cdot)$ and $c_s(s,\cdot)$, then we also know $c_t(s,\cdot)$ and $q(s,\cdot)=\frac{1}{\sqrt{\|c_t\|}}c_t(s,\cdot)$, as well as $\nabla_tc_s(s,\cdot)$ and 
\begin{equation}
\label{nablasq}
\nabla_sq(s,\cdot)=\frac{\nabla_s c_t}{\sqrt{|c_t|}}(s,\cdot) - \frac{1}{2} \frac{\Braket{\, \nabla_s c_t, c_t \,}}{|c_t|^{5/2}}c_t(s,\cdot),
\end{equation}
using the fact that $\nabla_s c_t=\nabla_tc_s$. The variation $\nabla_sc_s(s,\cdot)$ can then be computed in the following way
\begin{equation*}
\nabla_sc_s(s,t)=\nabla_sc_s(s,0)^{0,t} + \int_0^t \left[ \nabla_s\nabla_sc_t(s,\tau) - \mathcal R(c_t,c_s)c_s(s,\tau) \right]^{\tau,t} \mathrm d\tau
\end{equation*}
for all $t\in[0,1]$, where $\nabla_sc_s(s,0)$ is given by equation \eqref{geodeq1}, the second order variation $\nabla_s\nabla_sc_t(s,\cdot)$ is given by
\begin{eqnarray}
\label{nablasnablasct}
\nabla_s \nabla_s c_t &=& |c_t|^{1/2} \nabla_s\nabla_sq \,+\, \frac{ \Braket{\nabla_tc_s,c_t}}{|c_t|^2} \nabla_tc_s \nonumber \\
&+& \left( \frac{\Braket{\nabla_s\nabla_sq,c_t}}{|c_t|^{3/2}} \,-\, \frac{3}{2} \frac{\Braket{\nabla_tc_s,c_t}^2}{|c_t|^4} \,+\, \frac{|\nabla_tc_s|^2}{|c_t|^2} \right) c_t,
\end{eqnarray}
and $\nabla_s\nabla_sq$ can be computed via equation \eqref{geodeq2}.

Now, if we no longer have a continuous curve but a series of discrete observations $p_0, p_1, \hdots, p_n \in M$ made at discrete times $0=t_0 < t_1< \hdots < t_n=1$, then the optimal deformation of these points in the direction of a series of tangent vectors $u_0, u_1, \hdots, u_n$, where $u_k \in T_{p_k}M\,\,$ for $\,\,k=0,\hdots,n$, can be computed by the following steps.
%
\begin{algorithm}[Discrete Exponential Map] 
\leavevmode\par \noindent
Initialization : Set $c(0,t_k)=p_k$ and $c_s(0,t_k)=u_k$ for $k=0,\hdots,n$. \\
Heredity : If $c(s,t_k)$ and $c_s(s,t_k)$ are known for all $k$, then
\begin{enumerate} 
\item for all $\,\,0\leq k\leq n-1$, set
\begin{eqnarray*}
c_t(s,t_k) &=& \frac{1}{t_{k+1}-t_k} \log _{c(s,t_k)} c(s,t_{k+1}), \\
\nabla_tc_s(s,t_k) &=& \frac{1}{t_{k+1}-t_k} \left( c_s(s,t_{k+1})^{t_{k+1},t_k} - c_s(s,t_k) \right),
\end{eqnarray*}
where $\,\, \log \,\,$ denotes the inverse of the exponential map on $M$, and compute $q(s,t_k)=\frac{1}{\sqrt{\|c_t\|}}c_t(s,t_k)$ and $\nabla_sq(s,t_k)$ using equation \eqref{nablasq}.
\vspace{0.5em}
\item Compute $r(s,t_k)=\sum_{\ell=k}^n \mathcal R(q,\nabla_sq)c_s(s,t_\ell)^{t_\ell,t_k}$ for all $\,\,0\leq k\leq n$ and set
\begin{eqnarray*}
\nabla_sc_s(s,t_0) &=& r(s,t_0), \\
\nabla_s\nabla_s q(s,t_k) &=& \|q(s,t_k)\| \left( r(s,t_k) + r(s,t_k)^\parallel \right).
\end{eqnarray*}
\item For $k=0,\hdots,n-1$,
\begin{enumerate}
\item compute $\nabla_s\nabla_sc_t(s,t_k)$ using equation \eqref{nablasnablasct},
\item compute $\nabla_s c_s(s,t_{k+1})$ using 
\begin{equation*}
\nabla_s c_s(s,t_{k+1}) = \left[\,\, \nabla_s c_s(s, t_k) \, + \, (t_{k+1}-t_k) \left(\, \nabla_s\nabla_s c_t(s,t_k)\,-\, \mathcal R(c_s,c_t)c_s(s,t_k)\, \right) \,\, \right]^{t_k,t_{k+1}}.
\end{equation*}
\end{enumerate}
\item Finally, for all $0\leq k \leq n$, set
\begin{eqnarray*}
c(s+\epsilon,t_k) &=& \exp_{c(s,t_k)} \left( \epsilon c_s(s,t_k) \right) \\
c_s(s+\epsilon,t_k) &=& \left[ \,\,c_s(s,t_k) + \epsilon \nabla_sc_s(s,t_k) \,\,\right]^{t_k,t_{k+1}}.
\end{eqnarray*}
\end{enumerate}
\end{algorithm}
%
%
%
\section{Conclusion}
%
In the same way that the first-order Sobolev metric \eqref{sobolev} on the space of plane curves can be obtained as the pullback of the $L^2$-metric by the square root velocity function (\cite{sri}), our metric $G$ can be obtained as the pullback of a natural metric $\tilde G$ on the tangent bundle $\text{T}\mathcal{M}$ by the same SRVF. As such it is reparameterization invariant, and induces a Riemannian metric $\hat G$ on the shape space $\mathcal{S}$ for which the fiber bundle projection is formally a Riemannian submersion. On the other hand, the special role that $G$ gives to the starting points of the curves induces another formal fiber bundle structure, this time over the manifold $M$ seen as the set of starting points of the curves, for which the projection is formally also a Riemannian submersion. The geodesic distance induced by $G$ takes into account the distance between the origins of the curve and the $L^2$-distance between the $SRV$ representations, but instead of transporting the computations in a unique tangent plane as in \cite{lb} and \cite{zhang}, we stay in the manifold. This should allow us to take into account a greater amount of information on its geometry. Finally, explicit equations can be obtained for the geodesics on the space $\mathcal M$ of parameterized curves for our metric $G$, and the exponential map can be iteratively computed. Future work will include some numerical simulations to illustrate the work of this paper. 
%
%
%
\section*{Acknowledgments}
%
 This research was supported by Thales Air Systems and the french MoD DGA.
%
%
%

\end{document}